\theoremstyle{plain}
\newtheorem{proposition}{Proposition}
\theoremstyle{definition}
\newtheorem{definition}{Definition}
\newtheorem{example}{Example}
\newtheorem{notation}{Notation}
\newtheorem{proof 1}{Proof of Proposition}
\newtheorem{remark}{Remark}
\newcommand{\R}{\ensuremath{\mathbb{R}}}
\renewcommand{\H}{\ensuremath{\mathbb{H}}}
\newcommand{\Z}{\ensuremath{\mathbb{Z}}}
\newcommand{\N}{\ensuremath{\mathbb{N}}}
\newcommand{\G}{\ensuremath{\mathbb{G}}}
\newcommand{\binf}{\partial_\infty}
\begin{document}
\title{The visual boundary of $\Z^2$}

\author{Kyle Kitzmiller}
\address{Kyle Kitzmiller; Department of Mechanical Engineering; San Diego State University; 5500 Campanile Drive; San Diego, CA 92182-1323} 
\email{kitz2103@gmail.com}

\author{Matt Rathbun}
\address{Matt Rathbun; Department of Mathematics; University of California, Davis; One Shields Ave.; Davis, 95616}
\email{mrathbun@math.ucdavis.edu}

\keywords{visual boundary, Cayley graph, $Z^2$, geodesic ray, quasi-isometry}

\maketitle

\begin{abstract}
We introduce ideas from geometric group theory related to boundaries of groups.
We consider the visual boundary of a free abelian group, and show that it is an uncountable set with the trivial topology.
\end{abstract}

\section{Introduction}

The study of a metric space can often be facilitated by considering it in the large scale, or by
studying asymptotic phenomena.  For instance, adding a boundary to compactify (or, more generally, ``bordify") a metric space is a key tool
in understanding the space and its isometry group. A classical example is the hyperbolic space $\H^n$, with its boundary sphere at infinity. Isometries of  $\H^n$ extend to homeomorphisms of the boundary, and can be classified by their fixed points on the boundary. 
More generally, any Gromov hyperbolic space (that is, a space with large-scale negative curvature) has such a naturally defined boundary at infinity.

In trying to understand the geometry of groups, it is often useful to regard the group as a metric space by choosing a generating set, 
forming the associated {\em Cayley graph}, which will be defined below.
The metric induced by declaring all edges in the Cayley graph to have length one is called the {\em word metric} on the group.  
It would seem quite natural to define a  boundary for groups directly from the word metric, and this works well if the group is
Gromov hyperbolic.
In general, however, there are obstructions to the usefulness of this boundary, as we will see below.
This note explores properties of the visual boundary for groups, introducing the needed definitions along the way. 
The main result is that the visual boundary of $\Z^2$ (denoted $\binf(\Z^2)$) with the standard generating set possesses the trivial topology on an uncountable set. 
Indeed, there are many groups which have so called ``quasi-flats," or quasi-isometric embeddings of $\Z^2$.   We will see that the boundary of any such group will inherit the unpleasant properties of $\binf(\Z^2)$.

The exposition is intended to be readable for a student who has had a first course in topology and metric spaces, and who is familiar
with the definition and the most basic examples of groups.  (We also mention the axiom of choice.)  On the other hand, we hope 
that the paper will be a non-trivial read for working mathematicians in other areas.

\vspace{3 mm}
{\bf Acknowledgments.} We would like to thank Moon Duchin for the inspiration of this project, as well as the for all of her motivation and support. The authors were supported by VIGRE NSF grant number 0636297.

\section{Background}

\subsection{Metric notions}
We review here some useful definitions from metric geometry.

\begin{definition} \
\begin{itemize}
\item A \emph{geodesic segment}, {\em ray}, or {\em line}  in a metric space $X$ is an isometric embedding of 
$[0,a]$, $[0,\infty)$, or $\R$ into $X$.
That is, for instance, a geodesic line is a map $f: \R \to X$  such that for all $t_1, t_2 \in \R \, $, 
$d_X(f(t_1), f(t_2)) = |t_1 - t_2|$. We say a geodesic ray is \emph{from} $x_0$ or {\em based at} $x_0$ if $f(0) = x_0$.
\item A metric space is called a \emph{geodesic space} if any two points in the space can be joined by a geodesic segment.
\item Suppose $(X,d)$ is a metric space, and $Y \subset X$ is connected. There are two natural ways to metrize $Y$. The \emph{subspace metric} is $d_Y: Y \to \R_{\geq 0}$ defined by $d_Y(y_1, y_2) = d(y_1, y_2)$. Alternatively, the \emph{path metric} is $d_{path}: Y \to \R_{\geq 0}$ defined by $d_{path}(y_1, y_2) = \inf \{ length(\gamma) \, | \, \gamma \mbox{ is a path in } Y \mbox{ connecting } y_1 \mbox{ to } y_2 \}$. 
\item A geodesic space is called \emph{(geodesically) complete} if every geodesic segment can be extended infinitely in both directions.
\item A metric space is called \emph{proper} if closed balls are compact.  (This is needed for certain kinds of limiting arguments.)
\end{itemize}
\end{definition}

\begin{example} \cite{P} Consider the set $R = \{ (x, y, z) \, | \, x = 1, y = 0, z \geq 0 \} \cup \{ (x, y, z) \, | \, y = 0, z = 5x - 5, 0 \leq x \leq 1 \}$, rotated about the $z$-axis (see Figure \ref{figure:pencil}). Call the resulting set $X$, and give it the path metric as a subset of $\R^3$. Then $X$ is not geodesically complete. Take a geodesic segment from $(\epsilon, 0, 5\epsilon -5)$ to $(0, 0, -5)$ for some small $\epsilon > 0$. Trying to extend this geodesic to $(-\epsilon, 0, 5\epsilon -5)$ presents a problem. The length of the two segments would be $2\sqrt{26}\epsilon$, whereas, the distance between the two points $(\epsilon, 0, 5\epsilon - 5)$ and $(-\epsilon, 0, 5\epsilon - 5)$ is only $\pi \epsilon$ (along the horizontal circle $\{z = 5\epsilon -5\} \cap X$). Certainly, if we try to extend the geodesic to any other point on $X$, we will face the same difficulty: that there is a shorter path ``around" the cone, rather than going through the cone point.
\end{example}

\begin{figure}[tb]

\begin{center}
\includegraphics[width=2in]{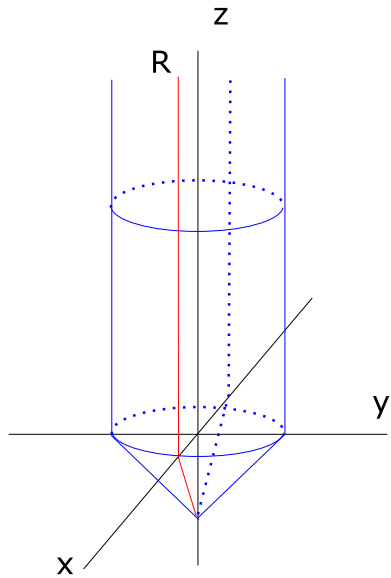}
\caption{The set $R$ rotated around the $z$-axis\label{figure:pencil}
}
\end{center}
\end{figure}

\begin{example} Let $X$ be an infinite-dimensional Hilbert space. Then $X$ is not proper, because the closed unit ball is not compact. To see this, take an orthonormal basis, $\{v_{\alpha} \}$ for $X$. Then any countably infinite sequence of the $\{ v_{\alpha} \}$ is a sequence with no convergent subsequence, since the distance between any two elements is $||v_{\alpha} - v_{\beta}|| = \sqrt{ \langle v_{\alpha}, v_{\alpha} \rangle + \langle v_{\beta}, v_{\beta}\rangle - \langle v_{\alpha}, v_{\beta} \rangle - \langle v_{\beta}, v_{\alpha} \rangle } = \sqrt{ ||v_{\alpha}|| + ||v_{\beta}||} = \sqrt{2}$.
\end{example}

\subsection{Cayley graphs}
The construction of a Cayley graph is a central tool in geometric group theory, allowing us
 to associate a metric space to a group with a given presentation.

\begin{definition} Let $G = \left< S \,  | \, R \right> $ be a group with generating set $S$ and relations $R$. We define a graph $Cay(G, S)$ whose vertices correspond to elements of $G$, and with edges between $g, h \in G$ if there exists $s \in S \cup S^{-1}$ so that $g = h\cdot s$. We give the resulting graph the graph metric, whereby each edge has length $1$, and the distance between vertices is the length of the shortest path between them. 
\end{definition}

\begin{remark} For any two elements $g, h \in G$, the distance from $g$ to $h$ in $Cay(G, S)$ is just the shortest length word in $S \cup S^{-1}$ such that $g = h\cdot s$. 
\end{remark}

\begin{figure}[tb]

\begin{center}
\includegraphics[width=1.5in]{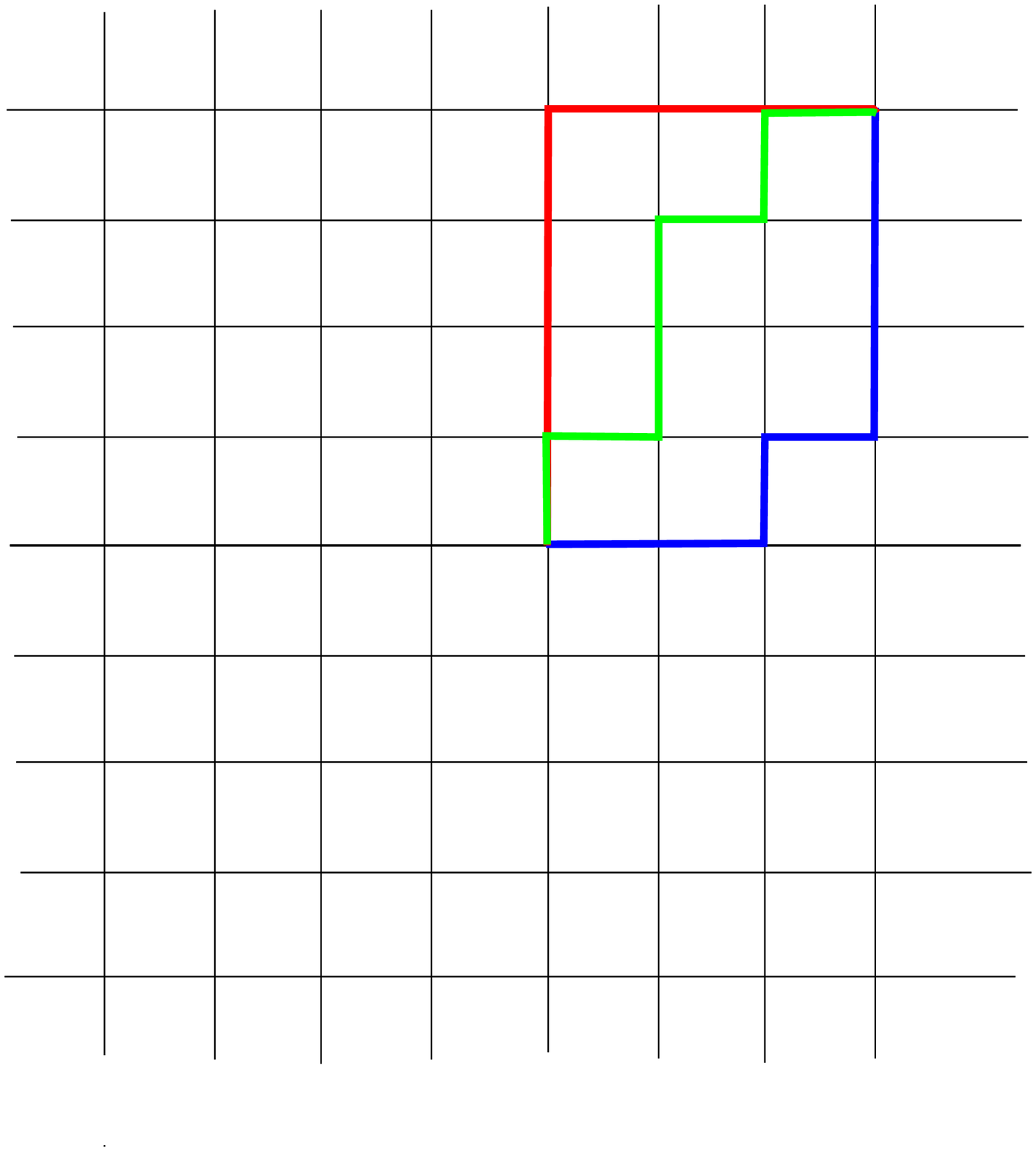}
\caption{Paths in $\Z^2$
\label{figure:Z2}
}
\end{center}
\end{figure}

\begin{example} 
\label{example:Z2}
$Cay(\Z^2, \{(1, 0), (0,1)\})$ is just the integer grid (see figure \ref{figure:Z2}). Consider a path from the origin to any distinct point $(m, n) \in \Z^2$. This path consists of a union of horizontal and vertical segments between the integer coordinate points of the graph, the vertices. There are some crucial differences from familiar metric spaces like $\R^2$ with the Euclidean metric:  there is more than one path of minimum length between the origin and $(m ,n)$ unless $m=0$ or $n=0$, and there is no unique prolongation of geodesic segments to rays.

The distance from $(m, n)$ to $(k, l)$ is $|m - k| + |n - l|$ (the $\ell^1$ distance). Notice that $(m, n) = (k, l) \pm |m - k| (1, 0) \pm |n - l|(0, 1)$, so the distance is the length of the smallest word $s$ composed of letters from $\{\pm(0,1), \pm(1,0)\}$ such that $(m, n) = (k, l) + s$.

Alternatively, one could consider embedding the integer grid into $\R^2$, and take the metric on $\Z^2$ to be the path metric induced by this inclusion.

\end{example}

\begin{remark} Notice that this graph is not determined by a group, but clearly depends on the choice of generating set $S$. To accommodate this, in the next section we introduce the notion of quasi-isometry.
\end{remark}

\subsection{Quasi-isometries}
Often, we want to say that two spaces share some of the same large-scale geometric features, even when they are not isometric. To this end, we introduce the concept of \emph{quasi-isometry}. This is like isometry, but allows for some bounded error in the form of a multiplicative and an additive factor. We will find that many notions about metric spaces can be ``quasified."

\begin{definition}\
\begin{itemize}
\item We say a map between two metric spaces, $f: (X, d_X) \to (Y, d_Y)$ is a \emph{quasi-isometric embedding} for some $k \geq 1$, $c \geq 0$, if for every $x_1, x_2 \in X$,
$$\frac{1}{k}d_X(x_1, x_2) - c \leq d_Y(f(x_1), f(x_2)) \leq kd_X(x_1, x_2) + c.$$

\item We say that a quasi-isometric embedding, $f: (X, d_X) \to (Y, d_Y)$, is a \emph{quasi-surjection} if there exists a $D > 0$ such that for every $y \in Y$, there is an $x \in X$ such that $d_Y(y, f(x)) < D$.

If $f: (X, d_X) \to (Y, d_Y)$ is a quasi-isometric embedding which is also a quasi-surjection, then we say that $f$ is a \emph{quasi-isometry} and we say that $(X, d_X)$ and $(Y, d_Y)$ are \emph{quasi-isometric}.

In particular, a quasi-isometry admits a quasi-inverse. When we compose a quasi-isometry with a quasi-inverse, we almost get the identity. But, as with most things ``quasi", we might be off by a multiplicative and additive constant.

\item If $f: (X, d_X) \to (Y, d_Y)$ is a quasi-isometry, a \emph{quasi-inverse} is a quasi-isometric embedding $g: (Y, d_Y) \to (X, d_X)$ so that for some $k \geq 1$,  $c \geq 0$, for all $x_1, x_2 \in X$, 
$$\frac{1}{k}d_X(x_1, x_2) - c \leq d_X( g\circ f(x_1), g\circ f(x_2)) \leq kd_X(x_1, x_2) + c,$$ 
and for all $y_1, y_2 \in Y$,
$$\frac{1}{k}d_Y(y_1, y_2) - c \leq d_Y( f\circ g(y_1), f\circ g(y_2)) \leq kd_Y(y_1, y_2) +c.$$
\end{itemize}
\end{definition} 

\begin{example} $\R$ is $(1,1)$--quasi-isometric to $\Z$.  Consider $f: \R \to \Z$, defined by $f(x) = \lfloor x \rfloor$, the floor function. Then for all $x, y \in \R$, $|x - y| - 1 \leq |\lfloor x \rfloor - \lfloor y\rfloor| \leq |x - y| + 1$. This map is clearly surjective.

Further, the inclusion $g: \Z \hookrightarrow \R$ is a quasi-inverse: For any $n \in \Z$, $f \circ g(n) = n$, and for any $x \in \R$, $g \circ f(x) = \lfloor x \rfloor$. So, if $m, n \in \Z$,
$$|m - n| = |f \circ g(m) - f \circ g(n)| = |m - n|,$$
and if $x, y \in \R$,
$$|x - y| - 1 \leq |g \circ f(x) - g \circ f(y)| \leq |x - y| + 1.$$
\end{example}

\begin{example}
\label{example:R2qiZ2}
$\R^2$ is $(2,2)$--quasi-isometric to $\Z^2$. We will go through the calculation, but the idea is simple:  rounding points in the plane down 
to points in the integer grid never distorts distances by too much, even when you change from $\ell^2$ to $\ell^1$ distance.
Consider $f: \R^2 \to \Z^2$, defined by $f(x, y) = (\lfloor x \rfloor, \lfloor y \rfloor)$. Then for any $(a, b), (x, y) \in \R^2$,

$$
\begin{array}{lllr}
d_{\Z^2}(f(a, b), f(x, y)) & = & |\lfloor x \rfloor - \lfloor a \rfloor| + |\lfloor y \rfloor - \lfloor b \rfloor | & \\
\\[ -7pt]
 & \leq & (|x - a| + 1) + (|y - b| + 1) & (\mbox{as above})\\
\\[ -7pt]
 & \leq & 2\max \{ |x - a|, |y - b| \} + 2 & \\
\\[ -7pt]
 & \leq & 2 \sqrt{ (\max \{ |x - a|, |y - b| \})^2 } + 2 & \\
\\[ -7pt]
 & \leq & 2 \sqrt{ (x - a)^2 + (y - b)^2 } + 2 & \\
\\[ -7pt]
 & = & 2d_{\R^2}((a, b), (x, y)) + 2 & \\
\end{array}
$$

and

$$
\begin{array}{lllr}
d_{\Z^2}(f(a, b), f(x, y)) & = & |\lfloor x \rfloor - \lfloor a \rfloor| + |\lfloor y \rfloor - \lfloor b \rfloor | & \\
\\[ -7pt]
 & \geq & (|x - a| - 1) + (|y - b| - 1) & (\mbox{also as above})\\
\\[ -7pt]
 & \geq & d_{\R^2}((a, b), (x, y)) - 2 & (\mbox{by the Triangle Inequality})\\
\\[ -7pt]
 & \geq & \frac{1}{2} d_{\R^2}((a, b), (x, y)) - 2 & \\
\end{array}
$$

It is easy to see that the inclusion $g: \Z^2 \hookrightarrow \R^2$ is a quasi-inverse; the composition $g\circ f:\R^2\to\R^2$
moves points no more than $\sqrt 2$.

\begin{remark} Above, we used quasi-isometry constants $k = 2, c = 2$. It is a nice exercise to show that $k = \sqrt{2}, c = 2$ are actually the best constants possible. But often we will not care what the constants actually are -- only that they exist.
\end{remark}

\end{example}

\begin{definition} A \emph{quasi-geodesic} is a quasi-isometric embedding of the real line into a space. That is, a map $f: \R \to X$ such that for some $k \geq 1$, $c \geq 0$, for all $t_1, t_2 \in \R$, 
$$\frac{1}{k}|t_1 - t_2| - c \leq d_X(f(t_1), f(t_2)) \leq k|t_1 - t_2| + c.$$
\end{definition}

Quasi-geodesics are useful, for instance, in discrete spaces:  they can sit still for a bounded period of time, and can make jumps of bounded size, but in the large-scale they proceed with distance roughly equal to time elapsed.

\begin{example}

\label{example:Qtheta}
Denote the real ray in $\R^2$ from the origin that makes an angle of $\theta$ with the positive $x$-axis as $r_{\theta}$. Then we can consider the image of $r_{\theta}$ under the the quasi-isometry $f: \R^2 \to \Z^2$ from Example \ref{example:R2qiZ2}. The result is a (disconnected) quasi-geodesic in $Cay(\Z^2, \{(1, 0), (0, 1)\}$. 

In this case, if we connect successive lattice points of $f\circ r_\theta$ with geodesic segments, the result is a geodesic ray in $Cay(\Z^2, \{(1,0), (0, 1)\})$. Call this ray $Q_{\theta}$ (see Figure \ref{figure:Q}).

\begin{figure}[tb]

\begin{center}
\includegraphics[width=2in]{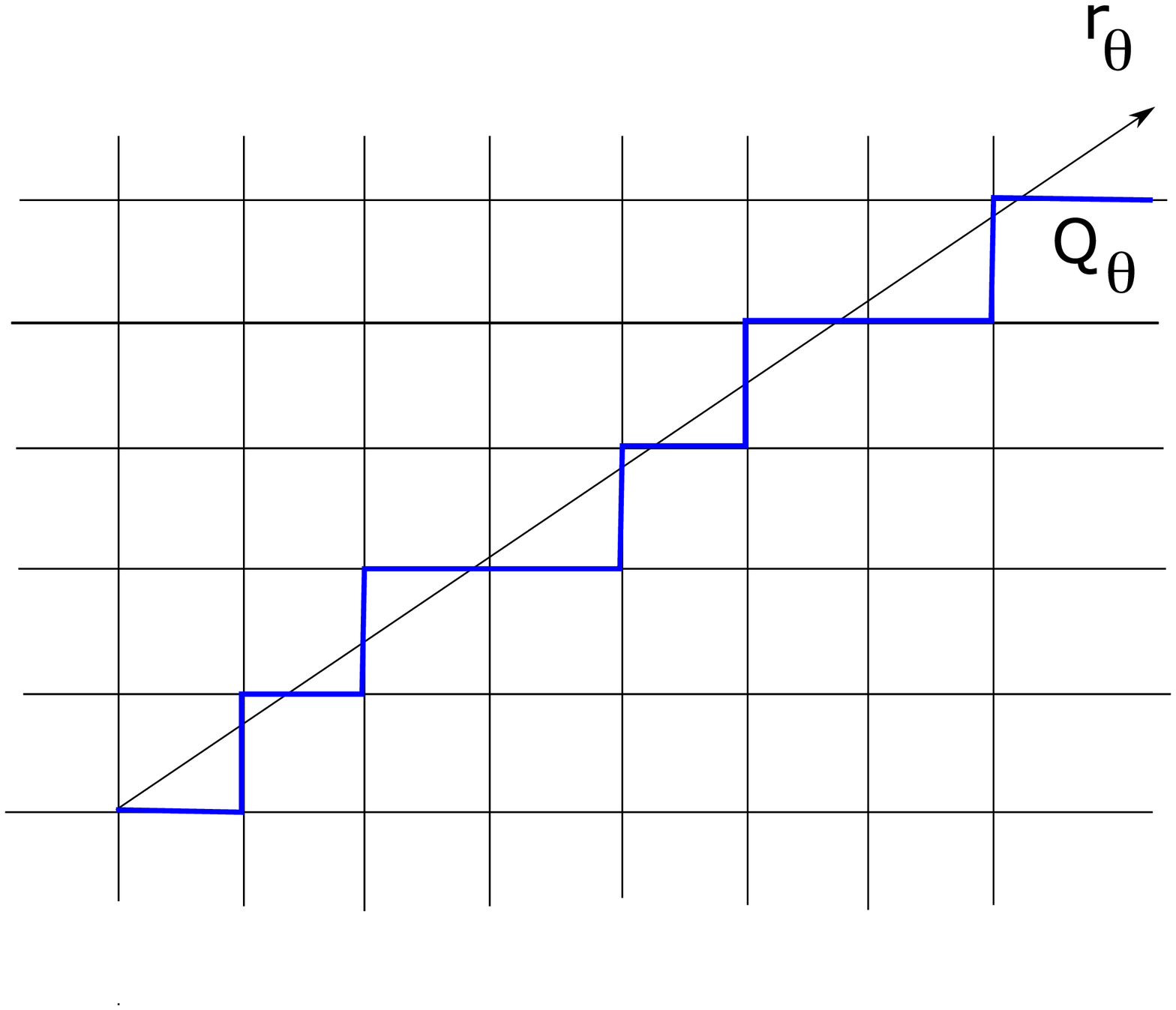}
\caption{$r_{\theta} \mapsto Q_{\theta}$ \label{figure:Q}
}
\end{center}
\end{figure}

\end{example}

Next, as promised, we confirm that the word metric is independent of the choice of generating set, up to quasi-isometry.

\begin{proposition} 
If $G$ is a finitely generated group with two (finite) generating sets $S$ and $S'$, then $Cay(G, S)$ is quasi-isometric to $Cay(G, S')$.
\end{proposition}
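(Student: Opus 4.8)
The plan is to take the identity map on the underlying set $G$ and show that, regarded as a map between the two metric spaces, it is a quasi-isometry. Write $d_S$ and $d_{S'}$ for the graph metrics on $Cay(G,S)$ and $Cay(G,S')$ respectively, and recall from the Remark above that $d_S(g,h)$ equals the length of the shortest word in $S \cup S^{-1}$ representing $h^{-1}g$ (the element $s$ with $g = h\cdot s$), and similarly for $d_{S'}$.

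First I would bound each metric by a multiple of the other. For each generator $s \in S$, express $s$ as a word in $S' \cup (S')^{-1}$, and set $M = \max_{s \in S} d_{S'}(e, s)$; this maximum is finite precisely because $S$ is finite. Each letter of $S \cup S^{-1}$ then has $S'$-length at most $M$, using that $d_{S'}(e, s^{-1}) = d_{S'}(e, s)$. If $d_S(g,h) = n$, then $h^{-1}g$ is a product of $n$ such letters, and concatenating their $S'$-expressions gives $d_{S'}(g,h) \le M\, d_S(g,h)$. Symmetrically, setting $M' = \max_{s' \in S'} d_S(e, s')$ (again finite because $S'$ is finite) yields $d_S(g,h) \le M'\, d_{S'}(g,h)$. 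Putting $k = \max\{M, M'\}$, these two inequalities combine to give, for all $g, h \in G$,
$$\frac{1}{k}\, d_S(g,h) \le d_{S'}(g,h) \le k\, d_S(g,h),$$
so the identity is a quasi-isometric embedding with multiplicative constant $k$ and additive constant $c = 0$.

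It then remains to check quasi-surjectivity. The identity is a bijection on the vertex set $G$, and every point of $Cay(G,S')$ lies on some edge, hence within distance $\tfrac{1}{2}$ of a vertex; thus every point is within distance less than $D = 1$ of the image, as required.

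The only real subtlety is the role of finiteness: it is exactly what guarantees that $M$ and $M'$ are finite, since each is a maximum over a finite set. Were the generating sets infinite, a generator of one could require arbitrarily long words in the other, and no single multiplicative constant would suffice. Everything else reduces to the routine observation that word length is subadditive under concatenation.
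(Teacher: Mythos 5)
Your proposal is correct and follows essentially the same route as the paper: the identity map on $G$, with multiplicative constants obtained as maxima of word lengths of one generating set's elements in the other (finite precisely because the generating sets are finite), and the reduction of $d(g,h)$ to the distance of $h^{-1}g$ from the identity. Your explicit check of quasi-surjectivity is a small addition the paper leaves implicit, but the argument is the same.
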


\begin{proof}
The identity map will be shown to be a quasi-isometry. Say $|S|=k$,  $|S'|=l$, let $d_S$ be the distance function in $Cay(G, S)$, and $d_{S'}$ in $Cay(G, S')$. Then, since $S$ and $S'$ are finite, let $m = \max \{ d_{S'}(s, e) \, | \, s \in S \}$, and $n= \max \{ d_{S}(s', e) \, | \, s' \in S' \}$, 
where $e\in G$ is the identity element.

Then, every element $g \in G$ can be written as a word in $S'$. And each of those generators can be written as words of $S$, each of length at most $n$. So $d_S(g, e) \leq n\cdot d_{S'}(g, e)$. To get the second inequality, the argument is reversed: $d_{S'}(g, e) \leq m\cdot d_S(g, e).$ So letting $k = \max \{m, n\}$ yields the quasi-isometry inequality. 

The argument is completed by noting that for any $g, h \in G$, $d(g, h) = d(h^{-1}g, e).$
\end{proof}

With this, we can speak unambiguously about the large-scale geometry of groups -- those properties of  groups that are invariant under quasi-isometry.

\subsection{The visual boundary}

\begin{notation}  Let $X$ be a geodesic space.  Then for $x_0 \in X,$ we define $$\G_{x_0}(X) = \{\hbox{unit speed geodesic rays from}~ x_0\}.$$ 
We will suppress $X$ from the notation and simply write $\G_{x_0}$.
\end{notation}

We want to think of light traveling along geodesics in the space $X$. So we think of the visual boundary as the set of all points one can ``see" at infinity, standing at the point $x_0$.

We give $\G_{x_0}$ the topology of uniform convergence on compact sets. Recall:

\begin{definition}
Let $(X,d)$ be a metric space and $Y$ a topological space. Given a
fixed element $f \in X^Y = \{$functions $g:\ Y \to X\}$, a
compact set $K$ of $Y$ and a number $\epsilon>0$, we let
$$B_K(f,\epsilon)\ =\ \{g \in X^Y\ |\ d(f(y),g(y))<\epsilon, \  \forall\  y \in K\}.$$

The sets $B_K(f,\epsilon)$ form a basis for the \emph{topology of uniform convergence on compact sets} on $X^Y$. 
\end{definition}

So  $\G_{x_0} \subset X^{\R}$, inherits the subspace topology. Roughly, if the images of two rays are ``close" on large compact sets, then the rays are ``close". And a sequence of rays converges to a limiting ray if the rays of the sequence agree with the limit on larger and larger compact sets. 

Sometimes, however, if we ``look" in different directions, we see the same point at infinity. To make this precise:

\begin{definition} We say that two geodesic rays, $g$ and $f$, are \emph{asymptotic} if there exists an $M \in \R$ such that $d(f(t), g(t)) \leq M$ for all $t$. This is an equivalence relation on rays. We will write $f \sim g$, and  denote the equivalence class of a ray $f \in \G_{x_0}$ by $[f]$, so $[f] = \{g \ | \ f \sim g\}$.
\end{definition}

\begin{definition}
The \emph{visual boundary} of a geodesic space $X$ at a point $x_0$,
denoted $\binf(X, x_0)$, is defined to be $\G_{x_0}/ \sim$, with the quotient topology. Let $\pi_{x_0} : \G_{x_0} \to \binf(X, x_0)$ be the natural projection map.
\end{definition}

\begin{example}The visual boundary of $\R^2$ at $(0, 0)$ is homeomorphic to the unit circle, $S^1$.
\end{example}

Again, the idea is simple: every geodesic ray from the origin corresponds to exactly one point on the unit circle, and exactly one point at infinity.  

\begin{proof}  Define a function $H : S^1 \to \binf(\R^2, (0,0))$ by $H(\theta) = \pi(r_{\theta})$, where $r_{\theta}$ is the straight line ray from the origin through the point on the unit circle corresponding to $\theta$ (see Figure \ref{figure:circle}).

\begin{figure}[tb]

\begin{center}
\includegraphics[width=1.5in]{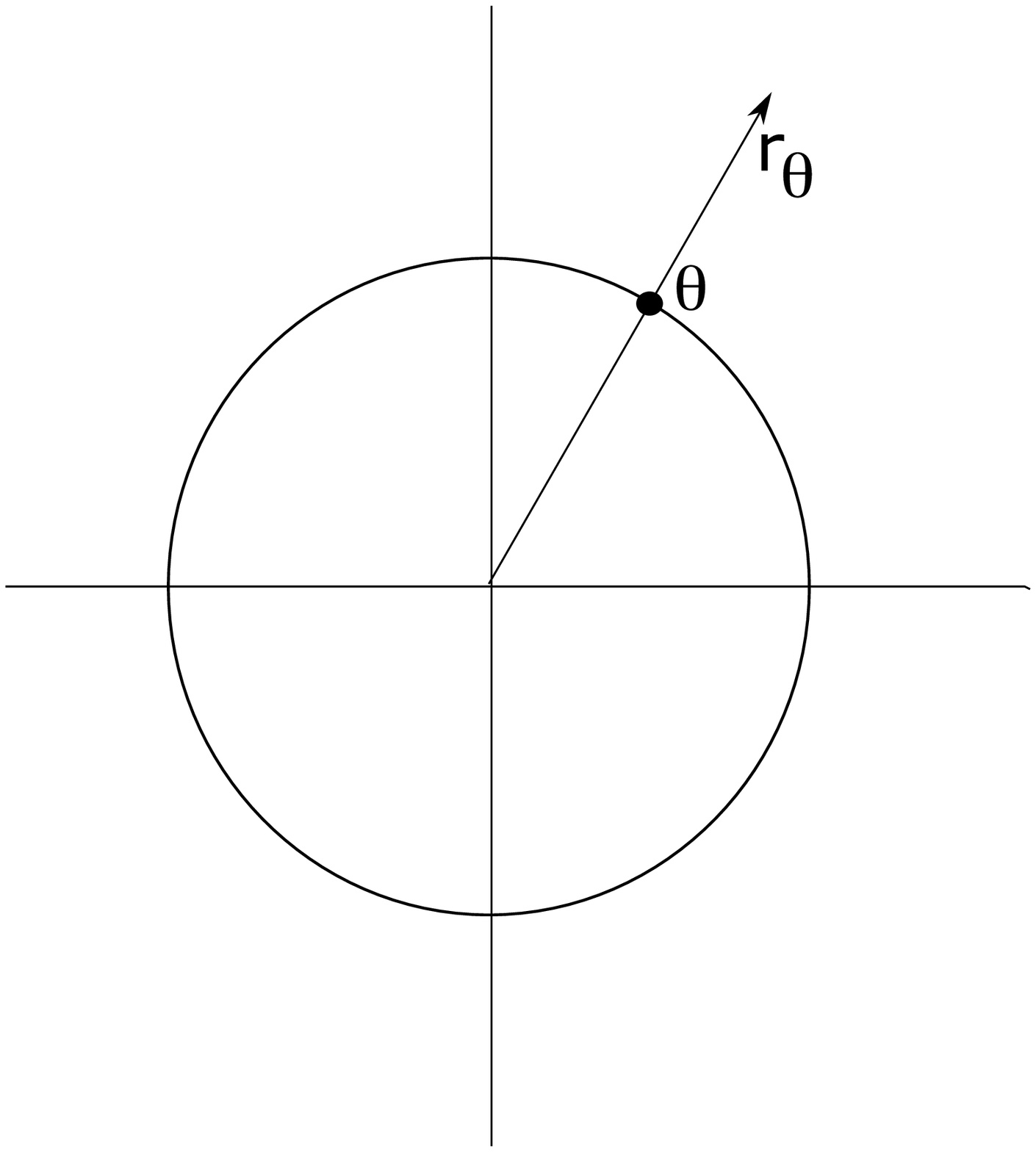}
\caption{$S^1 \to \binf(\R^2)$\label{figure:circle}
}
\end{center}
\end{figure}

To show that this map is a bijection, note that given any two distinct points on the circle, $\theta$ and $\phi$, the geodesic rays $r_{\theta}$ and $r_{\phi}$ diverge. That is, given any $M$, there exists some $T$ such that $d(r_{\theta}(t), r_{\phi}(t))>M$ for all $t > T$. Further, $H$ is clearly surjective, as the only geodesic rays in $\R^2$ are straight line rays.

To show that the map is continuous, we will examine open balls about arbitrary points.  Used implicitly in the remainder of the proof is the fact that $H$ and $\pi$ are bijections. 

Assume $V$ is open in $\binf(\R^2, (0,0))$.  Then $H^{-1}(V) = \{r(1) \ | \ r \in \pi^{-1}(V)\}$.  Now, consider an arbitrary point, $r^*(1) \in H^{-1}(V) \subset S^1$. We know what the basis of open sets in $\G$ looks like: it consists of the $B_K(f,\epsilon)$. So there exists an $\epsilon^*$ and a compact set $K = \{1\}$ such that the ball $B_{\{1\}}(r^*, \epsilon^*)$ is in $\pi^{-1}(V)$, because $\pi(r^*)$ is in $V$ and $\pi^{-1}(V)$ is open.  Then,	
$$H^{-1}(\pi(B_{\{1\}}(r^*, \epsilon^*))) = H^{-1}(\pi(\{r \ | \ d(r(t), r^*(t)) < \epsilon^*, t \in \{1\}\})$$

$$= H^{-1}(\pi(\{r \ | \ d(r(1), r^*(1)) < \epsilon^*\}) = \{r(1) \ | \ d(r(1),r^*(1)) < \epsilon^*\} $$

$$= B(r^*,\epsilon^*) \subset S^1$$

Now, assume $W$ is open in $S^1$.  We want to show that $H(W)$ is open.  Consider any ray $r^*$ such that $\pi(r^*) \in H(W)$.  Then we know there exists an
$\epsilon^*$ such that $B(r^*(1), \epsilon^*) = \{r(1) \ |\  d(r(1), r^*(1)) < \epsilon^*\} \subset W$.  Then, 
$$H(B(r^*(1), \epsilon^*)) = \{\pi(r) \ |\  d(r(1), r^*(1)) < \epsilon^*\}$$
$$= \{\pi(r) \ | \ d(r(t), r^*(t)) < \epsilon^*, t \in \{1\}\} = \pi(B_{\{1\}}(r^*, \epsilon^*))$$
Since, in this case, $\pi^{-1}(\pi(B_{\{1\}}(r^*, \epsilon^*))) = B_{\{1\}}(r^*, \epsilon^*)$ is open, so is its image.  Thus, given any point $\pi(r^*)$ in $H(W)$, there is an open set around this point contained in $H(W)$. We conclude that $H(W)$ is open, and ultimately that $H$ is a homeomorphism between $\binf(\R^2, (0,0))$ and $S^1$.

\end{proof}

We would like a way to talk about the visual boundary of a metric space, without reference to a specified basepoint. Unfortunately, there are many cases when the visual boundary changes if we use a different basepoint.

\begin{example} Consider, once again the set $X$ obtained by rotating $R$ about the $z$-axis (see Figure \ref{figure:pencil}). If we take our basepoint to be $(0, 0, -5)$, then $R$ is a geodesic ray from the basepoint, as is any rotation of $R$ about the $z$-axis. So $\G_{(0, 0, -5)}$ is a circle's worth of rays. If we take our basepoint to be $(1, 0, 0)$, on the other hand, the only geodesic ray from the basepoint is the ray $\{ (x, y, z) \, | \, x = 1, y = 0, z \geq 0 \}$. So $\G_{(1, 0, 0)}$ consists of a single ray.

Notice, however, that all the rays in $\G_{(0, 0, -5)}$ are asymptotic, since the distance between any two is bounded by $\pi$ (in the path metric). So when we take the quotient, we get $\binf(X, (0, 0, -5)) \cong \binf(X, (1, 0, 0)) \cong \{\mbox{point}\}$. 
\end{example}

In the above example, $\G$ depended on choice of basepoint, but the topological space $\binf(X)$ did not. In some spaces, however, even the visual boundary will change with the basepoint.

\begin{figure}[tb]

\begin{center}
\includegraphics[width=2in]{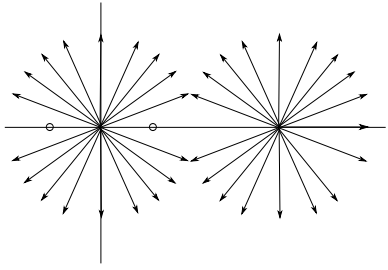}
\caption{The visual boundary of a twice-punctured plane depends on the choice of basepoint. \label{figure:2points}
}
\end{center}
\end{figure}

\begin{example}
Consider $X = \R^2 \setminus \{(1, 0), (-1, 0) \}$. Then if we choose the basepoint $(0, 0)$, there is a geodesic ray in every direction except along the positive and negative $x$-axes. So $\binf(X, (0, 0)) \cong (0, \pi) \cup (\pi, 2\pi)$. However, if we choose the basepoint $(3, 0)$, there is a geodesic ray in every direction except towards the negative $x$-axis. So $\binf(X, (2, 0)) \cong (0, 2\pi)$ (see Figure \ref{figure:2points}).
\end{example}

Fortunately, all is not lost.

\begin{proposition}  Given two points $x_1$ and $x_2$ in a geodesic space $X$, let $L:X \to X$ be an 
isometry carrying $x_1$ to $x_2$.  Then $\binf(X, x_1)$ is homeomorphic to $\binf(X, x_2)$.
 \label{proposition:basepoint}
\end{proposition}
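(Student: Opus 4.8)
The plan is to let the isometry $L$ act on geodesic rays by post-composition, show that the resulting map on ray spaces is a homeomorphism, and then push it down to the quotients. First I would define $L_* : \G_{x_1} \to \G_{x_2}$ by $L_*(f) = L \circ f$. Since $L$ is an isometry with $L(x_1) = x_2$, for any unit-speed geodesic ray $f$ based at $x_1$ we have $d(L \circ f(t_1), L \circ f(t_2)) = d(f(t_1), f(t_2)) = |t_1 - t_2|$ and $L \circ f(0) = L(x_1) = x_2$, so $L \circ f$ is again a unit-speed geodesic ray, now based at $x_2$; hence $L_*$ is well-defined. Because $L$ is a bijective isometry, $L^{-1}$ is also an isometry, this time carrying $x_2$ to $x_1$, and $(L^{-1})_*$ is a two-sided inverse for $L_*$, so $L_*$ is a bijection.

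Next I would check that $L_*$ is a homeomorphism for the topology of uniform convergence on compact sets. The essential observation is that $L$ preserves distances \emph{exactly}, so $d(L \circ f(t), L \circ g(t)) = d(f(t), g(t))$ for every $t$; hence for any compact $K$, any $\epsilon > 0$, and any rays $f, g$,
$$g \in B_K(f, \epsilon) \iff L \circ g \in B_K(L \circ f, \epsilon).$$
Thus $L_*$ carries the basic open set $B_K(f,\epsilon) \cap \G_{x_1}$ bijectively onto $B_K(L \circ f, \epsilon) \cap \G_{x_2}$. Since $L_*$ sends basic open sets to basic open sets, and the same reasoning applied to $L^{-1}$ shows that $(L^{-1})_* = (L_*)^{-1}$ does too, both $L_*$ and its inverse are open maps, so $L_*$ is a homeomorphism.

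Then I would verify that $L_*$ respects the asymptotic relation. If $f \sim g$, witnessed by $d(f(t), g(t)) \leq M$ for all $t$, then $d(L \circ f(t), L \circ g(t)) = d(f(t), g(t)) \leq M$, so $L \circ f \sim L \circ g$; running the same argument through $L^{-1}$ gives the converse, so $f \sim g \iff L_* f \sim L_* g$. Consequently the continuous map $\pi_{x_2} \circ L_* : \G_{x_1} \to \binf(X, x_2)$ is constant on $\sim$-classes and, by the universal property of the quotient topology, factors as $\bar{L} \circ \pi_{x_1}$ for a continuous map $\bar{L} : \binf(X, x_1) \to \binf(X, x_2)$ sending $[f]$ to $[L \circ f]$. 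The identical construction applied to $L^{-1}$ yields a continuous inverse $\overline{L^{-1}}$, so $\bar{L}$ is the required homeomorphism.

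The computations are all routine once this framework is in place, precisely because $L$ is a genuine isometry and no multiplicative or additive error ever enters. The one step demanding care is the descent to the quotient: I must resist simply declaring that a bijection respecting the equivalence relation induces a homeomorphism, and instead factor \emph{continuous} maps through $\pi_{x_1}$ and $\pi_{x_2}$ in both directions. A minor but genuine background point is that ``an isometry $L : X \to X$'' must be read as a surjective (hence bijective) distance-preserving map, since the existence and isometry of $L^{-1}$ underlie both the bijectivity and the openness of $L_*$.
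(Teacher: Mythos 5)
Your proposal is correct and follows essentially the same route as the paper's proof, which simply asserts in two sentences that isometries preserve geodesicity and the distances between rays, hence induce an identification of $\G_{x_1}$ with $\G_{x_2}$ descending to the quotients. You have merely filled in the details the paper leaves implicit (the action by post-composition, the preservation of basic open sets $B_K(f,\epsilon)$, and the factorization of continuous maps through the quotient), which is a worthwhile elaboration but not a different argument.
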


\begin{proof} Isometries preserve geodesicity, so $\G_{x_1} \cong \G_{L(x_1)} = \G_{x_2}$. Further, the distance between geodesic rays is preserved, so $(\G_{x_1}/\sim) \cong (\G_{x_1}/\sim)$.
\end{proof}

\begin{remark}
When the isometry group of a space acts transitively on the space (e.g. $\R^2$ or $\Z^2$), we can suppress the basepoint. So we will denote $\binf(X,x_0)$ as simply $\binf(X)$, $\pi_{x_0}$ as $\pi$, and $\G_{x_0}$ as $\G$, when convenient.
\label{remark:basepoint}
\end{remark}

\begin{example}
In light of Remark \ref{remark:basepoint}, $\binf(\R^2) \cong S^1$.
\end{example}

\section{The case of $\Z^2$}

\subsection{Geodesic rays}

We will henceforth abuse notation, and identify $\Z^2$ with its Cayley graph with respect to the standard generating set, $Cay(\Z^2, \{(1, 0), (0, 1) \})$, the integer grid (Example \ref{example:Z2}). We will also implicitly assume the basepoint to be $(0,0)$. Geodesic paths consist of horizontal and vertical segments with no ``backtracking." 
As noted above, geodesics are not unique. For example, there are twenty geodesic paths between $(0,0)$ and $(3,3)$, all of length $6$ (see Figure \ref{figure:Z2}).

It is clear, then, that for any ray $f$, the equivalence class $[f]$ is ``large":  there are many geodesics $g$ such that $d(f,g)<M$ for all $t$.

\begin{notation}  An infinite ray in $\Z^2$, consisting of vertical and horizontal segments, can be expressed as an infinite string of the digits corresponding to each segment. Let 0, 1, 2, 3 and 4 represent east, north, west, south, and east respectively. Then any infinite ray in $\Z^2$ can be written as an infinite string over the alphabet $\{0,1,2,3,4\}$. (The redundant use of $0$ and $4$ for the eastward direction is to simplify later notation.)

If a ray is in the first quadrant, it can be written as a string over $\{0,1\}$; in the second, $\{1,2\}$; in the third, $\{2,3\}$; and in the fourth, $\{3,4\}$. To eliminate the only ambiguity, we adopt the convention that the east-pointing ray will be represented as the string $(\bar{0}) = (0, 0, 0, \dots)$ of all zeros.   
Given a geodesic ray $f \in \Z^2$, we will denote this expansion by $f = (f_1, f_2, f_3, ... )$.
Then if $m(f)=\min_n\{f_n\}$, we have $f_n\in\{m,m+1\}$ for all $n$.   
\end{notation}

\subsection{The topology on $\binf(\Z^2)$}

\begin{definition} We will say a ray $f$ in $\Z^2$ has \emph{slope $\theta$} if $f\sim Q_\theta$, where $Q_\theta$ is 
the ray  in direction $\theta$ (see Example \ref{example:R2qiZ2}).
\end{definition}

Note that not every ray has a slope. However, a ray cannot have more than one slope, because $\sim$ is transitive.

This sets us up to show that the visual boundary of $\Z^2$ is uncountable.

\begin{proposition} $|\binf(\Z^2)|=\mathfrak{c}$, the cardinality of the continuum.
\end{proposition}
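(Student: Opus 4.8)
The plan is to prove the two inequalities $|\binf(\Z^2)| \le \mathfrak{c}$ and $|\binf(\Z^2)| \ge \mathfrak{c}$ separately, and then invoke the Schr\"oder--Bernstein theorem.

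For the upper bound I would use the string encoding from the notation above. Every geodesic ray $f \in \G$ is a concatenation of unit horizontal and vertical steps, so it is completely determined by its associated infinite word $(f_1, f_2, f_3, \dots)$ over the alphabet $\{0,1,2,3,4\}$, and distinct rays yield distinct words. Thus $f \mapsto (f_1, f_2, \dots)$ is an injection of $\G$ into the set of infinite strings over a five-letter alphabet, so $|\G| \le 5^{\aleph_0} = \mathfrak{c}$. Since $\binf(\Z^2) = \G/\!\sim$ is a quotient of $\G$, we get $|\binf(\Z^2)| \le |\G| \le \mathfrak{c}$.

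For the lower bound I would exhibit an injection of a continuum-sized set into $\binf(\Z^2)$ using the rays $Q_\theta$ of Example~\ref{example:Qtheta}. It suffices to restrict to the first quadrant, $\theta \in [0, \pi/2)$, a set of cardinality $\mathfrak{c}$, and to show that $\theta \mapsto [Q_\theta]$ is injective, i.e.\ that $Q_\theta \not\sim Q_\phi$ whenever $\theta \ne \phi$. By the definition of $\sim$, this reduces to showing that $d(Q_\theta(t), Q_\phi(t))$ is \emph{unbounded} in $t$.

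This divergence estimate is the heart of the argument and the step I expect to be the main obstacle. Because $Q_\theta$ is the unit-speed ($\ell^1$) geodesic tracking the Euclidean ray $r_\theta$, after traveling $\ell^1$-distance $t$ it has taken roughly $\frac{\cos\theta}{\cos\theta+\sin\theta}\,t$ eastward steps and $\frac{\sin\theta}{\cos\theta+\sin\theta}\,t$ northward steps, so $Q_\theta(t)$ lies within a \emph{bounded} distance of the point $\bigl(\tfrac{t\cos\theta}{\cos\theta+\sin\theta},\, \tfrac{t\sin\theta}{\cos\theta+\sin\theta}\bigr)$. I would make this precise by controlling the error with the rounding in the construction of $Q_\theta$ together with the quasi-isometry constants of Example~\ref{example:R2qiZ2}, noting crucially that this error is $O(1)$ and does not grow with $t$. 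Computing the $\ell^1$ distance between the two tracked points and using that $\theta \mapsto \frac{\cos\theta}{\cos\theta+\sin\theta} = \frac{1}{1+\tan\theta}$ is strictly decreasing on $[0,\pi/2)$, the coefficient of $t$ is strictly positive whenever $\theta \ne \phi$, so $d(Q_\theta(t), Q_\phi(t))$ grows linearly and is in particular unbounded. This gives the injection $\theta \mapsto [Q_\theta]$, hence $|\binf(\Z^2)| \ge \mathfrak{c}$, and combining the two bounds completes the proof.
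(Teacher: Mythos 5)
Your proposal is correct and follows essentially the same route as the paper: an upper bound by encoding each geodesic ray as an infinite string over a finite alphabet (the paper packages this as an explicit injection $\G \to [0,4)$ via quinary/binary expansions, while you just count $5^{\aleph_0} = \mathfrak{c}$, which is equally valid), and a lower bound via the injection $\theta \mapsto [Q_\theta]$. The only real difference is that you actually sketch the divergence estimate showing $Q_\theta \not\sim Q_\phi$ — correctly accounting for the $\ell^1$ reparametrization — whereas the paper dismisses this step with a reference to the earlier argument that the Euclidean rays $r_\theta$ diverge; your version is, if anything, the more complete one.
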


In order to prove this, we will describe an injection from $S^1$ into $\binf(\Z^2)$, and an injection from $\binf(\Z^2)$ into $[0, 4)$, making use of the quinary expansions described in the previous section. 

\begin{proof}  The proof will proceed in two parts, exhibiting the two injections.

First, define the map $I: S^1 \to  \binf(\Z^2)$ to be given by $I(\theta) =
\pi(Q_{\theta})$, where $Q_{\theta}$ is the quasi-isometric embedding of $r_{\theta}$, the ray
that passes through the point $\theta$ on the unit circle in $\R^2$. Then
given any distinct $\theta, \phi \in S^1$, we have already seen that $\pi(Q_{\theta}) \neq \pi(Q_{\phi})$.
Thus $I$ is an injection, and $\mathfrak{c} \leq |\binf(\Z^2)|$.

For the second injection, recall that any geodesic ray can travel in at most two directions.  Hence, each ray corresponds to an infinite binary expansion.
Let these binary strings be mapped to the interval $[0,4)$ in the following way:

Let $B: {\{0, 1\} }^\N \to [0, 1]$ be the standard map from a binary expansion to the real number it represents. So $B((\epsilon_1, \epsilon_2, \epsilon_3, \dots)) = \displaystyle\sum_{n=1}^{\infty}\frac{\epsilon_n}{2^{n}}$, where $\epsilon_n \in \{0, 1\}$ for all $n$.

Now, for a quinary expansion, $( f_1, f_2, f_3, \dots ) \in {\{0, 1, 2, 3, 4\}}^\N$, let $m(f) = \min_n \{f_n\}$ as before. Then define a map $N: \G(\Z^2) \to [0, 4)$
by $$N((f_1, f_2, f_3, \dots )) = m + B((f_1 - m, f_2 - m, f_3 - m, \dots )).$$
So for instance, $N(\bar{0})=0$ and 
$$N((2, 3, 2, 3, 2, 3, \dots)) = 2 + B((0, 1, 0, 1, 0, 1, \dots )) = 2 + \frac{1}{3} = \frac{7}{3}.$$

It is easy to see (by uniqueness of binary expansions for the fractional part) that this map is injective from $\G(\Z^2) \to [0, 4)$. Thus $|\G(\Z^2)| \leq \mathfrak{c}$, so $|\binf(\Z^2)| \leq \mathfrak{c}$.

It follows that $|\binf(\Z^2)| = \mathfrak{c}$.

\end{proof}

\begin{proposition}  $\binf(\Z^2)$ possesses the trivial topology.
\end{proposition}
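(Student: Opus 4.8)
The plan is to prove that the only nonempty open subset of $\binf(\Z^2)$ is the whole space. By definition of the quotient topology, a set $V\subseteq\G$ descends to an open set iff $V$ is open and \emph{saturated} (i.e. $V=\pi^{-1}(\pi(V))$). So it suffices to show that the only nonempty open saturated $V\subseteq\G$ is $\G$ itself. Given such a $V$, I will in fact show $\pi(V)=\binf(\Z^2)$; saturation then forces $V=\pi^{-1}(\pi(V))=\pi^{-1}(\binf(\Z^2))=\G$, as desired. To extract a usable consequence of openness, fix $g\in V$: there is a compact $K\subseteq[0,\infty)$ and an $\epsilon\in(0,1)$ with $B_K(g,\epsilon)\subseteq V$, and choosing $n_0\in\N$ with $K\subseteq[0,n_0]$, every ray agreeing with $g$ on its first $n_0$ steps lies in $B_K(g,\epsilon)\subseteq V$.

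The core step is a single-quadrant claim. Classify each ray by the (at most two, adjacent) directions occurring in its quinary expansion, giving four closed ``quadrants'' $Q_1,\dots,Q_4$ (strings over $\{0,1\}$, $\{1,2\}$, $\{2,3\}$, $\{3,4\}$ respectively), with each cardinal ray lying in two adjacent quadrants. I claim that if $V$ contains one ray $g\in Q_i$, then $\pi(Q_i)\subseteq\pi(V)$. Given a target $f\in Q_i$, define $f'$ to follow $g$ for its first $n_0$ steps and then execute $f$'s own sequence of moves. Since $g$'s prefix and $f$ use directions from the same adjacent pair, both coordinates of $f'$ are monotone, so $f'$ is a genuine geodesic ray; it agrees with $g$ on the first $n_0$ steps, hence $f'\in V$. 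Because $g$ and $f$ are isometric embeddings, $d(g(n_0),(0,0))=n_0$ and $d\big(f(n_0+s),f(s)\big)=n_0$, so the triangle inequality gives $d\big(f'(n_0+s),f(n_0+s)\big)\le 2n_0$ for all $s$. Thus $f'\sim f$, and $[f]=\pi(f')\in\pi(V)$.

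With the claim in hand I chain across quadrants. Starting from the given $g\in V$, say $g\in Q_1$, the claim yields $\pi(Q_1)\subseteq\pi(V)$; in particular the classes of the bounding cardinal rays (east and north) lie in $\pi(V)$, and since $V$ is saturated the cardinal rays \emph{themselves} lie in $V$. The north ray also belongs to $Q_2$ and the east ray to $Q_4$, so re-applying the claim gives $\pi(Q_2),\pi(Q_4)\subseteq\pi(V)$; then the west ray (now reached inside $Q_2$ and hence, by saturation, lying in $V$) belongs to $Q_3$, giving $\pi(Q_3)\subseteq\pi(V)$. After at most three applications every quadrant is covered, so $\pi(V)=\binf(\Z^2)$ and therefore $V=\G$.

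The point where care is needed is that one cannot hope to reach an arbitrary $f$ directly from $g$ in a single step: if $f$ and $g$'s initial prefix point into opposite quadrants, the concatenation would be forced to backtrack and so would fail to be a geodesic. This backtracking incompatibility is the main obstacle, and it is exactly what the chaining resolves — by routing through the cardinal rays shared by adjacent quadrants and using saturation to plant an honest cardinal ray in $V$ as a springboard into the next quadrant. The only other things to verify are routine: that agreement on the first $n_0$ steps really places $f'$ inside $B_K(g,\epsilon)$, and that monotonicity of both coordinates within a single closed quadrant guarantees the constructed path is an isometric embedding.
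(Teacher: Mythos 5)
Your proposal is correct and follows essentially the same route as the paper: splice a prefix of the ray already known to lie in the open set onto the tail of the target ray (staying within one closed quadrant so no backtracking occurs), observe the splice is both in the basic open ball and asymptotic to the target, and then chain through the shared axis rays, re-invoking openness and saturation to cross into adjacent quadrants. The only differences are cosmetic (your roles of $f$ and $g$ are swapped, and you make the $2n_0$ asymptoticity bound and the saturation step explicit where the paper leaves them implicit).
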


In other words, the only open sets in the visual boundary are the entire set and the empty set.

\begin{proof} By the quotient topology on $\G / \sim$, a set $U \subset \binf(\Z^2)$ is open exactly when its pre-image $\pi^{-1}(U)$ is open in $\G$.  Assume that $U$ is some
non-empty open set in $\binf(\Z^2)$. Then, $W=\pi^{-1}(U)$ is also open and non-empty.
We wish to show that $U$ is the entire set.  It suffices to show that given any $g \in \G$, $\pi(g) \in U$.

As $W$ is open and non-empty, there is some geodesic ray $f$ in $W$.
Consider any ray $g$ such that $m(g) = m(f)$. (This means that $g$ and $f$ are in the same quadrant.) We will show that given any compact set $K \subset [0, \infty)$ and $\epsilon > 0$, $g$ has
some representative $g_s \in [g]$ such that $g_s \in B_K(f,\epsilon) \subset W$. It will follow that $\pi(g) \in U$.

Let the compact set $K=[a,b]$ and $\epsilon>0$ be given, and let $s = \lceil b \rceil \in \Z$.
Then define the representative $g_s$ of $g$ as follows:

$$ {g_s(t)} = \left\{ \begin{array}{ccr} {f(t)} & \mbox{for} & t \leq  s \\ f( s )+g(t)-g( s ) & \mbox{for} &  t > s \end{array} \right.$$

where the sum is group addition on $\Z^2$.

\begin{figure}[tb]

\begin{center}
\includegraphics[width=2.5in]{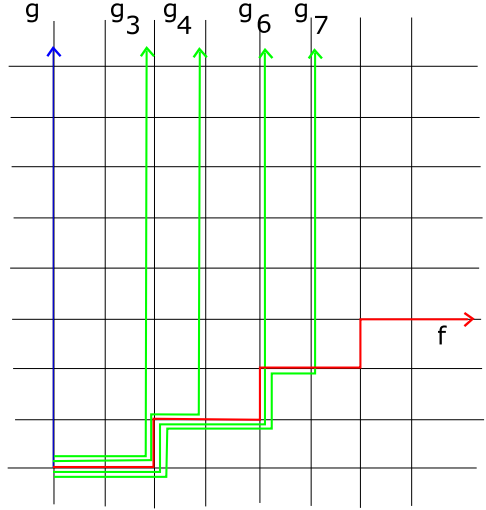}
\caption{A sequence of rays asymptotic to $g$, in a neighborhood of $f$
\label{figure:gb}
}
\end{center}
\end{figure}

To clarify, consider the infinite binary expansion of $g_s$.  This is identical to that of $f$ for the first $s$ steps (so $d(g_s(t), f(t)) = 0$ for $t \leq s$), and afterwards is identical to that of $g$ (so $g_s \sim g$)
(see Figure \ref{figure:gb}).

Clearly, $g_s \in B_K(f,\epsilon)$, so $\pi(g_s) \in \pi(B_K(f,\epsilon))$. Then since $B_K(f,\epsilon) \subset W$, $\pi(g_s) \in U$.  Finally, since $g_s \sim g$, $\pi(g_s)=\pi(g)$. We conclude that $\pi(g) \in U$ and $g \in W$. 

Recall that $f$ and $g$ are in the same quadrant because $m(f) = m(g) = m$.  In particular, we see that the axis geodesic $h = (\overline{m+1}) \in B_K(f,\epsilon)$, where we take addition $mod \ 4$.

We now take advantage of the fact that $W$ is open. If $h \in W$ then there must exist some $\epsilon'$ such that $B_K(h,\epsilon') \subset W$. Then let $j = (\overline{m+2})$ and let $j_s$ be the representative function as above, so that $j_s \in B_K(h, \epsilon')$. Therefore $\pi(j) \in U$. Consequently, all axis directions are in $U$.  By the same argument, then, we include in the set $U$ the images of all other non-axis geodesic rays $g$ for which $m(g) \not = m(f)$.  We can then conclude that given any geodesic ray $g \in \Z^2$, $\pi(g) \in U$.

By assuming only that $U$ was open and non-empty, we showed that $U$ contains all elements of $\binf(\Z^2)$.
We conclude that $\binf(\Z^2)$ has the trivial topology.

\end{proof}

\section{Further Comments}
Informally speaking, if we ``zoom out" from $\Z^2$ by rescaling distances to be smaller and smaller, we limit to $\R^2$ with the $\ell^1$-norm. (Formally, this construction is called the {\em asymptotic cone}, and $\mathrm{Cone}(\Z^2)=(\R^2,  \ell^1$).) 
We expect the same method of proof from above to show that the visual boundary of $(\R^2, \ell^1)$ is an uncountable set with the trivial topology. And in fact, this is true.

\begin{proposition} $\binf((\R^2, \ell^1))$ has the cardinality $\mathfrak{c}$, and the trivial topology.
\end{proposition}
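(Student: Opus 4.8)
The plan is to follow the two-part strategy used for $\binf(\Z^2)$, first pinning down what geodesic rays in $(\R^2,\ell^1)$ look like. Since translations are isometries of $(\R^2,\ell^1)$ and act transitively, Proposition~\ref{proposition:basepoint} and Remark~\ref{remark:basepoint} let me base everything at the origin and write $\binf((\R^2,\ell^1))$ unambiguously. The key structural fact I would establish is that a unit-speed geodesic ray from the origin stays in a single closed quadrant and is monotone in each coordinate. Indeed, if $f(t)=(x(t),y(t))$ with $f(0)=0$, then applying the unit-speed condition both to the pair $0,t$ and to a pair $s,t$ forces $|x(t)|-|x(s)|=|x(t)-x(s)|$ and likewise for $y$, so each coordinate keeps a constant sign and $|x|,|y|$ are nondecreasing. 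In the first quadrant these rays are precisely $f(t)=(x(t),\,t-x(t))$ with $x\colon[0,\infty)\to[0,\infty)$ nondecreasing, $1$-Lipschitz, and $x(0)=0$. This is the $\ell^1$ analogue of ``no backtracking'' in $\Z^2$, and it is exactly what makes the splicing construction below work.

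For the cardinality I would exhibit injections in both directions. Define $I\colon S^1\to\binf((\R^2,\ell^1))$ by $I(\theta)=\pi(r_\theta)$, the class of the straight-line ray in direction $\theta$ reparametrized to unit $\ell^1$ speed. Two distinct directions diverge linearly: radial projection carries $S^1$ bijectively onto the $\ell^1$ unit diamond, so distinct $\theta,\phi$ give distinct unit-speed direction vectors, whence $\|r_\theta(t)-r_\phi(t)\|_1\to\infty$ and the rays are non-asymptotic. Thus $\mathfrak{c}\le|\binf((\R^2,\ell^1))|$. For the reverse bound, every element of $\G$ is a continuous map $[0,\infty)\to\R^2$, hence determined by its restriction to $\Q\cap[0,\infty)$; there are at most $\mathfrak{c}^{\aleph_0}=\mathfrak{c}$ such restrictions, so $|\binf((\R^2,\ell^1))|\le|\G|\le\mathfrak{c}$. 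Cantor--Schr\"oder--Bernstein then gives cardinality exactly $\mathfrak{c}$. Note that this counting replaces the quinary-expansion argument, which is unavailable once rays are continuous profiles rather than digit strings.

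For the topology I would transplant the proof that $\binf(\Z^2)$ is trivial almost verbatim. Let $U$ be nonempty and open, set $W=\pi^{-1}(U)$, and pick $f\in W$ lying in some closed quadrant. Given any $g$ in the same quadrant and any basic neighborhood $B_K(f,\epsilon)$ with $K=[a,b]$, put $s=\lceil b\rceil$ and define the spliced ray $g_s$ by $g_s(t)=f(t)$ for $t\le s$ and $g_s(t)=f(s)+g(t)-g(s)$ for $t>s$. The monotonicity characterization is precisely what is needed to check $g_s$ is a unit-speed geodesic ray: for $t_1\le s<t_2$ the displacement $g_s(t_2)-g_s(t_1)=(f(s)-f(t_1))+(g(t_2)-g(s))$ is a sum of two vectors whose coordinates carry the common sign of the quadrant, so its $\ell^1$-norm adds to $(s-t_1)+(t_2-s)=t_2-t_1$. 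Moreover $g_s$ agrees with $f$ on $[0,s]\supset K$, so $g_s\in B_K(f,\epsilon)\subset W$, while $g_s\sim g$ since beyond $s$ they differ by the fixed translate $f(s)-g(s)$. Hence $\pi(g)=\pi(g_s)\in U$, and $U$ contains the class of every ray in the quadrant of $f$.

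To finish, I would move between quadrants through the coordinate-axis rays, exactly as in the $\Z^2$ argument: each positive axis direction lies on the boundary shared by two quadrants, so once an axis ray has been placed in $W$ by the previous step, I reuse the splicing construction with that axis ray as the new reference ray to sweep up the neighboring quadrant, iterating around all four. This forces $U=\binf((\R^2,\ell^1))$, establishing the trivial topology. \textbf{The main obstacle} is the structural lemma on $\ell^1$-geodesics together with the verification that the splice $g_s$ is genuinely an isometric embedding across the junction $t=s$; everything else is bookkeeping. The continuous setting costs a little extra care here, since a ray is now an arbitrary monotone $1$-Lipschitz profile rather than a string of digits, but the same-sign/same-quadrant property makes the $\ell^1$-norms add just as cleanly as in the lattice case.
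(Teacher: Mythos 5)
Your argument for the trivial topology is essentially the paper's: the same structural lemma that $\ell^1$-geodesic rays are coordinate-monotone within a closed quadrant, the same splice $g_s(t)=f(s)+g(t)-g(s)$ for $t>s$ agreeing with $f$ on $K$, and the same passage between quadrants through the shared axis rays. In fact you supply two verifications the paper leaves implicit (that the unit-speed condition forces each coordinate to keep a constant sign with $|x|,|y|$ nondecreasing, and that the spliced ray is genuinely geodesic across $t=s$ because same-sign displacements make $\ell^1$-norms add), and both are correct. Where you genuinely diverge is the upper bound on cardinality: the paper invokes the Axiom of Choice to pick a representative of each asymptoty class and injects $\binf((\R^2,\ell^1))$ into $\binf(\Z^2)$ via the quasi-isometry $f\mapsto Q_f$, reusing the quinary-expansion count already done for $\Z^2$; you instead bound $|\G|$ directly by observing that a continuous ray is determined by its restriction to $\Q\cap[0,\infty)$, giving at most $\mathfrak{c}^{\aleph_0}=\mathfrak{c}$ rays. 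Your route is more elementary and avoids choice entirely, at the cost of not exhibiting the (conceptually pleasant) fact that the two boundaries inject into one another; the paper's route reinforces the ``asymptotic cone'' theme of the final section but leans on the earlier $\Z^2$ computation. Both are sound, and your lower bound via $r_\theta$ reparametrized to unit $\ell^1$ speed is a fine substitute for the paper's inclusion of $\Z^2$-rays.
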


\begin{proof}
Geodesic rays are no longer restricted to vertical and horizontal segments, but they have a similar property. Let us first discuss geodesic rays that enter the interior of the first quadrant. Let $f(t) = (f_1(t), f_2(t))$ be a geodesic ray from the origin, passing through the point $f(t_0) = (x, y)$, with $x, y > 0$. Then for all $t > t_0$, $f_1(t) \geq x$, and $f_2(t) \geq y$. In other words, once a geodesic begins to move in a north-westerly direction, it can never again move toward the south or east (see Figure \ref{figure:ell1}). A similar property, of course, also holds in the other quadrants.  

\begin{figure}[tb]

\begin{center}
\includegraphics[width=2.5in]{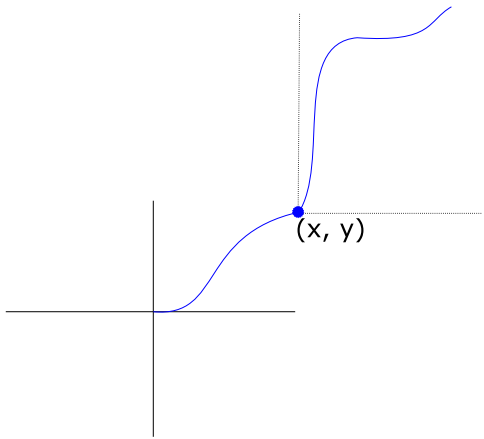}
\caption{A geodesic in $(\R^2, \ell^1)$.
\label{figure:ell1}
}
\end{center}
\end{figure}

There are more geodesic rays in this space than in $\Z^2$. But after we take the quotient, we get the same boundary. We will appeal to the Axiom of Choice. Certainly, $|\binf((\R^2, \ell^1))|$ is at least $\mathfrak{c}$, since each geodesic ray in $\Z^2$ includes as a geodesic ray into $(\R^2, \ell^1)$. Now, for each equivalence class of asymptotic rays $[f] \in \binf((\R^2, \ell^1))$, choose a representative geodesic ray, $f$. Then, as before, consider the image of this ray under the quasi-isometry from $\R^2$ onto $\Z^2$, and connect vertices by horizontal and vertical segments to get $Q$, a geodesic ray in $\Z^2$. Identifying $Q_f$ and $Q_g$ with their images in $\R^2$ by inclusion, we see that $f\sim Q_f$ and $g\sim Q_g$, 
so the map from $\binf((\R^2, \ell^1))$ to $\binf(\Z^2)$ is an injection. This establishes that $|\binf((\R^2, \ell^1))| = \mathfrak{c}$. 

Next, we use an identical construction to the one above to show that the topology is trivial.

Let $f, g$ be any arbitrary geodesic rays in the closure of quadrant $I$.  We will show that given any compact $K \subset [0, \infty)$ and $\epsilon > 0$, $g$ has a representative $g_b \in [g]$ such that $g_b \in B_K(f,\epsilon)$. 

Let the compact set be $K=[a,b]$ and $\epsilon>0$ be given.
Then define the representative $g_b$ of $g$ as follows:

$$ {g_b(t)} = \left\{ \begin{array}{ccr} {f(t)} & \mbox{for} & t \leq  b \\ {f(b)+g(t)-g(b)} & \mbox{for} & t > b \end{array} \right.$$

where now the sum is component addition on $\R^2$.

Just as before, this argument establishes that any open set containing a single ray in quadrant $I$ contains all rays in quadrant $I$, and can be extended to show that any non-empty open set contains every ray.
\end{proof}

What's wrong with this state of affairs?  This boundary completely fails to be Hausdorff:  we can't separate any two directions at infinity.  
Convergence to a particular point in the boundary is meaningless.

To see some of the consequences of this finding, consider  that a large class of groups have an undistorted free abelian subgroup; that is, a quasi-isometric embedding of $\Z^2 \cong \langle a, b \rangle$,  called a \emph{quasi-flat}.  These arise whenever two elements commute and there is no ``shortcut" to words in those elements coming from a relator.  Besides the obvious extension of the same argument to $\Z^n$, quasi-flats can also be found in right-angled Artin groups, as well as mapping class groups of surfaces.
Papasoglu shows  in \cite{PP} that every semi-hyperbolic group which is not hyperbolic contains such a quasi-flat. This includes fundamental groups of compact manifolds of non-positive curvature. 
So this note shows, in particular, that any metric space containing a quasi-flat will have a bad boundary.

\end{document}